\theoremstyle{plain}
\newtheorem{thm}{Theorem}%[section]
\newtheorem{lemma}{Lemma}%[section]
\def\citeapos#1{\citeauthor{#1}'s (\citeyear{#1})}
\newcommand{\medx}{\mathrm{med}\,x}
\newcommand{\medz}{\mathrm{med}\,z}
\DeclareMathOperator*{\argmin}{arg\,min}
\begin{document}
\begin{frontmatter}
\title{A sharper bound of the Hotelling-Solomons inequality}
\runtitle{A sharper bound of Hotelling-Solomons inequality}

\begin{aug}
\author{\fnms{Yuzo} \snm{Maruyama}
\ead[label=e1]{maruyama@port.kobe-u.ac.jp}%
% \and
%\fnms{Akimichi} \snm{Takemura}
%\ead[label=e2]{a-takemura@biwako.shiga-u.ac.jp}
}

\runauthor{Y. Maruyama}

\address{Kobe University \\
\printead{e1}}

\end{aug}

\begin{abstract}
The original Hotelling-Solomons inequality indicates that an upper bound of
$|$mean - median$|$/(standard deviation) is $1$.  
In this paper, we find a new bound depending on the sample size, 
which is strictly smaller than $1$.
\end{abstract}

\begin{keyword}[class=MSC]
\kwd[Primary ]{62E10}  
\kwd[; secondary ]{62E15}
\end{keyword}

\begin{keyword}
\kwd{Hotelling-Solomons inequality}
\kwd{mean-median inequality}
\end{keyword}
\end{frontmatter}

%\section{Introduction}
%\label{sec:intro}
Suppose we have the data
\begin{equation}\label{sample}
 x_1,\dots,x_n\quad\text{with}\quad x_1\leq x_2 \leq \dots \leq x_n \text{ and } x_1<x_n.
\end{equation}
The mean, the standard deviance and the median are
\begin{equation*}
\begin{split}
 \bar{x}&=\frac{\sum_{i=1}^n x_i}{n}, \ s=\sqrt{\displaystyle\frac{\sum_{i=1}^n(x_i-\bar{x})^2}{n}},\\
\medx &=
\begin{cases}
 x_{k+1} & n=2k+1 \\
(x_{k}+x_{k+1})/2 & n=2k
\end{cases}
\quad\text{for }k\in\mathbb{N}.
\end{split} 
\end{equation*}
%where $\medx$ satisfies both
%\begin{equation}\label{eq:med}
%\frac{ \#(x_i\leq \medx)}{n}\geq \frac{1}{2} \ \text{ and } \ \frac{\#(x_i\geq \medx)}{n}\geq \frac{1}{2}
%\end{equation}
%with $\#(x_i\geq a)$, the number of $x_i\geq a$ among $x_1,\dots,x_n$.
Provided the well-known fact 
\begin{equation}\label{argmin}
 \medx=\argmin_{\mu \in\mathbb{R}}\sum_{i=1}^n|x_i-\mu|,
\end{equation}
we easily have the following inequality
\begin{equation}\label{ineq.1}
 \begin{split}
 |\medx-\bar{x}|&=\frac{1}{n}\bigl|\sum_{i=1}^n(x_i-\medx)\bigr|
\leq \frac{1}{n}\sum_{i=1}^n\bigl|x_i-\medx\bigr|\\
&\leq \frac{1}{n}\sum_{i=1}^n\bigl|x_i-\bar{x}\bigr|
\leq \sqrt{(1/n)\sum\nolimits_{i=1}^n(x_i-\bar{x})^2}
%\sqrt{\displaystyle\frac{1}{n}\sum_{i=1}^n(x_i-\bar{x})^2} \\
=s,
\end{split} 
\end{equation}
where the second and third inequalities are from \eqref{argmin} and Jensen's inequality, respectively.
The inequality equivalent to \eqref{ineq.1},
\begin{equation}\label{skewness}
 \frac{|\medx-\bar{x}|}{s}\leq 1
\end{equation}
is called \citeapos{Hotelling-Solomons-1932} inequality,
where they regarded $(\medx-\bar{x})/s$ as a measure of skewness.
See \cite{Garver-1932} for a proof using some algebraic inequalities.

\cite{Majindar-1962} investigated the corresponding inequality for random variable $X$, with a positive standard deviation $\sqrt{\mathrm{Var}[X]}$.
%With the definition of the median
%\begin{equation}
% M=\frac{\sup\{x:F(x)<1/2\}+\inf\{x:F(x)>1/2\}}{2},\quad\text{where c.d.f.~}F
%\end{equation} 
For the median $M$ and the expected value $\mathrm{E}[X]$,
\cite{Majindar-1962} proved the following inequality
\begin{equation}
 \frac{\left|\mathrm{E}[X]-M\right|}{\sqrt{\mathrm{Var}[X]}}<2\Bigl(\frac{pq}{p+q}\Bigr)^{1/2} ,
\end{equation}
where $p=\mathrm{Pr}(X>\mathrm{E}[X])$ and $q=\mathrm{Pr}(X<\mathrm{E}[X])$.
%Given $r=\mathrm{Pr}(X=\mathrm{E}[X])$, 
As in Lemma \ref{lem:pq} below, we have
\begin{equation}
 2\Bigl(\frac{pq}{p+q}\Bigr)^{1/2}\leq \min\bigl(2\{p(1-p)\}^{1/2},2\{q(1-q)\}^{1/2},(p+q)^{1/2}\bigr)
\leq 1,
\end{equation}
which is a sharper bound if either $p\in(0,1/2)$ or $q\in(0,1/2)$ are provided. 
Further
\cite{Majindar-1962} suggested a sharper bound of \eqref{skewness},
given by
\begin{equation}\label{odd.1}
 \frac{|\medx-\bar{x}|}{s}\leq\Bigl(\frac{k}{k+1} \Bigr)^{1/2},
\end{equation}
when $n$ is odd as $ n=2k+1$ with $k\in\mathbb{N}$.
However the proof was not given.
\cite{Hawkins-1971} and \cite{Boyd-1971} investigated some properties
of the ordered statistic and showed that the range of the ordered statistic, \eqref{sample},
should be
\begin{equation}
\begin{split}
\displaystyle -(n-1)^{1/2} &\leq  \frac{x_1-\bar{x}}{s}\leq -\Bigl(\frac{1}{n-1} \Bigr)^{1/2},\\
\displaystyle -\Bigl(\frac{n-i}{i} \Bigr)^{1/2} &\leq  
\frac{x_i-\bar{x}}{s}\leq \Bigl(\frac{i-1}{n+1-i} \Bigr)^{1/2}, \\
\displaystyle \Bigl(\frac{1}{n-1} \Bigr)^{1/2}&\leq  \frac{x_n-\bar{x}}{s}\leq (n-1)^{1/2}. 
\end{split}
\end{equation}
Hence, when $n$ is odd as $ n=2k+1$ with $k\in\mathbb{N}$,
the median, $x_{k+1}$, satisfies \eqref{odd.1}.
Further they showed that the equalities
\begin{equation}
  \frac{\medx-\bar{x}}{s}=
\begin{cases}
 -\bigl\{k/(k+1) \bigr\}^{1/2} \\
 \bigl\{k/(k+1) \bigr\}^{1/2} 
\end{cases}
\end{equation}
are attained by the cases
\begin{align}
x_1&= \dots = x_{k+1}<x_{k+2}=\dots =x_{2k+1}, \label{eq.odd.1}\\
x_1&= \dots = x_k<x_{k+1}=\dots =x_{2k+1},\label{eq.odd.2}
\end{align}
respectively. 
Their methodology, however, does not seem applicable for $n=2k$.
In this note, following \cite{Majindar-1962},
we give the proof of \eqref{odd.1} for $n=2k+1$ and find a new bound for $n=2k$.
\begin{thm}
Suppose $n\geq 3$. Then
\begin{equation}\label{eq:main}
 \frac{|\medx-\bar{x}|}{s}\leq
\displaystyle
\begin{cases}
\displaystyle \Bigl(\frac{k}{k+1} \Bigr)^{1/2} & n=2k+1, \\
\displaystyle \Bigl(\frac{k-1}{k+1} \Bigr)^{1/2} & n=2k.
\end{cases}
\end{equation}
Further the equalities for $n=2k$, 
\begin{equation}
  \frac{\medx-\bar{x}}{s}=-\Bigl(\frac{k-1}{k+1} \Bigr)^{1/2},\quad
  \frac{\medx-\bar{x}}{s}=\Bigl(\frac{k-1}{k+1} \Bigr)^{1/2}
%\begin{cases}
% -\bigl\{(k-1)/(k+1) \bigr\}^{1/2} \\
% \bigl\{(k-1)/(k+1) \bigr\}^{1/2} 
%\end{cases}
\end{equation}
are attained by the cases
\begin{align}
x_1&= \dots =x_k= x_{k+1}<x_{k+2}=\dots =x_{2k}, \label{eq.even.1} \\
x_1&= \dots = x_{k-1}<x_{k}=x_{k+1}=\dots =x_{2k},\label{eq.even.2}
\end{align}
respectively. 
\end{thm}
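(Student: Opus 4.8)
The plan is to reduce both parities to a single elementary scheme: normalize the data, split the ordered sample into a ``low'' block and a ``high'' block, bound the sum of squares of each block from below (by Cauchy--Schwarz on the low block and term-by-term on the high block), and compare the total with $n s^2$. Since $(\medx-\bar{x})/s$ is invariant under shifts and positive scalings and merely changes sign under the reflection $x_i\mapsto -x_{n+1-i}$ (which preserves $s$), I would assume $\bar{x}=0$, so that $\sum_i x_i=0$ and $s^2=n^{-1}\sum_i x_i^2$, and that $M:=\medx\ge 0$. It then suffices to prove $M^2\le \{k/(k+1)\}s^2$ when $n=2k+1$ and $M^2\le\{(k-1)/(k+1)\}s^2$ when $n=2k$.

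For the odd case $n=2k+1$ I would take the high block to be $x_{k+1},\dots,x_{2k+1}$ ($k+1$ entries, each $\ge M\ge 0$) and the low block $x_1,\dots,x_k$. Each high entry contributes at least $M^2$, so $\sum_{i\ge k+1}x_i^2\ge(k+1)M^2$ and $\sum_{i\ge k+1}x_i\ge(k+1)M$; as the total sum is zero, the low block sums to at most $-(k+1)M\le 0$, whence Cauchy--Schwarz gives $\sum_{i\le k}x_i^2\ge k^{-1}(k+1)^2M^2$. Adding the two blocks yields $n s^2=\sum_i x_i^2\ge (k+1)M^2(2k+1)/k$, i.e. $M^2\le\{k/(k+1)\}s^2$, which is \eqref{odd.1}.

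The even case $n=2k$ is the real obstacle, because $M=(x_k+x_{k+1})/2$ is an average of two order statistics, and the symmetric split into $x_1,\dots,x_k$ and $x_{k+1},\dots,x_{2k}$ only reproduces the trivial bound $M\le s$. The key idea is an asymmetric split matching the extremal configuration \eqref{eq.even.2}: put $x_k$ into the high block, so that the high block is $x_k,x_{k+1},\dots,x_{2k}$ ($k+1$ entries) and the low block is $x_1,\dots,x_{k-1}$ ($k-1$ entries, nonempty since $k\ge 2$). Writing $a=x_k\le M\le x_{k+1}=b$, so that $b\ge M\ge 0$ and, from $M\ge 0$, $a\ge -b$, the high block satisfies $\sum x_i^2\ge a^2+kb^2$ and $\sum x_i\ge a+kb\ge (k-1)b\ge 0$; the low block then sums to at most $-(a+kb)\le 0$, so Cauchy--Schwarz gives a low-block contribution $\ge (a+kb)^2/(k-1)$. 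Hence it suffices to verify $a^2+kb^2+(a+kb)^2/(k-1)\ge k(k+1)(a+b)^2/\{2(k-1)\}$, and the difference of the two sides factors as $\tfrac{k}{2}(b-a)(a+3b)$.

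Finally I would read off the sign and the equality cases from this factorization. The factor $b-a\ge 0$ holds because $x_k\le x_{k+1}$, and $a+3b\ge 2b\ge 0$ follows from $a\ge -b$ and $b\ge 0$, so $\tfrac{k}{2}(b-a)(a+3b)\ge 0$ and the bound $M^2\le\{(k-1)/(k+1)\}s^2$ follows. Equality throughout forces the Cauchy--Schwarz equality (the low entries all equal) together with $a=b$, so that $x_k=x_{k+1}=\dots=x_{2k}=M$ and $x_1=\dots=x_{k-1}=-\tfrac{k+1}{k-1}M$, which is exactly \eqref{eq.even.2}; applying the reflection gives \eqref{eq.even.1}. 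The only delicate point is the choice of the asymmetric split and checking that the resulting quadratic form factors with the correct sign under the single structural assumption $M\ge 0$; everything else is bookkeeping.
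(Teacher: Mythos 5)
Your argument is correct, and for the even case it takes a genuinely different route from the paper. The paper normalizes to $z_i=(x_i-\bar x)/s$, splits the sample into the strictly negative block ($\ell$ terms) and strictly positive block ($n-m$ terms), applies Cauchy--Schwarz to \emph{both} blocks to get $a^2\le n^2/(n/\ell+n/(n-m))$, invokes a separate lemma bounding $1/(1/p+1/q)$ by $p(1-p)$, $q(1-q)$ and $(p+q)/4$, and then runs a four-case analysis on the signs of $z_k$ and $z_{k+1}$; in Cases I and II (where the two middle entries straddle zero) it only obtains the weaker bound $1/2$ and must separately note that $\{(k-1)/(k+1)\}^{1/2}>1/2$ to close the argument. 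You instead fix the split at the index $k$ once and for all, reduce to $\medx\ge 0$ by the reflection $x_i\mapsto -x_{n+1-i}$ (which halves the case count), bound the high block pointwise and the low block by Cauchy--Schwarz, and absorb all remaining sign configurations into the single factorization $\tfrac{k}{2}(b-a)(a+3b)\ge 0$ with $a=x_k\le b=x_{k+1}$, $a\ge -b$, $b\ge 0$; I checked that identity and it is right, as is the resulting chain $\sum_i x_i^2\ge a^2+kb^2+(a+kb)^2/(k-1)\ge k(k+1)(a+b)^2/\{2(k-1)\}$. What your approach buys is the elimination of the auxiliary lemma and of the case analysis, plus a cleaner derivation of the equality configurations \eqref{eq.even.1}--\eqref{eq.even.2} as the forced degeneracies of the two inequalities; what the paper's approach buys is a uniform framework (the $p,q$ lemma) that connects directly to Majindar's probabilistic bound $2\{pq/(p+q)\}^{1/2}$ and treats the odd case by the identical mechanism. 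Your odd-case argument, by contrast, is essentially the paper's chain \eqref{a2}--\eqref{kisu.1} rearranged, so the novelty of your write-up is concentrated where it matters, namely $n=2k$.
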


\begin{proof}
Let $z_i=(x_i-\bar{x})/s$. % in the data \eqref{sample}. 
Then the left-hand side of \eqref{eq:main}
is replaced by $ |\medz|$ where
\begin{equation}
\medz =
\begin{cases}
 z_{k+1} & n=2k+1, \\
(z_{k}+z_{k+1})/2 & n=2k.
\end{cases} 
\end{equation}
Since $\sum_{i=1}^n z=0$ and $z_1<z_n$, 
there exist $a>0$, $\ell \in\mathbb{N}$ and $m\in\mathbb{N}$ with $ 1\leq \ell\leq m \leq n-1$ 
such that
\begin{equation}
\begin{split}
&z_i<0 \text{ for }1\leq i\leq \ell, \quad z_i>0 \text{ for }m+1\leq i\leq n, \\
&\text{and}\quad a= -\sum_{i=1}^\ell z_i = \sum_{i=m+1}^n z_i.
\end{split}
\end{equation}
Then, by the Cauchy-Schwarz inequality, we have
\begin{equation}
\begin{split}
a^2&=\Bigl(\sum_{i=1}^\ell z_i\Bigr)^2\leq \ell \sum_{i=1}^\ell z_i^2 \\
a^2&=\Bigl(\sum_{i=m+1}^n z_i\Bigr)^2\leq (n-m) \sum_{i=m+1}^n z_i^2, 
\end{split} 
\end{equation}
and hence
\begin{equation}\label{inequ:1}
\begin{split}
& \left(\frac{n}{\ell}+\frac{n}{n-m}\right) a^2\leq n\Bigl\{\sum_{i=1}^\ell+ \sum_{i=m+1}^n\Bigr\}z_i^2  
=n\sum_{i=1}^n z_i^2=n^2 \\
&\text{or equivalently}\quad a^2\leq  \frac{n^2}{n/\ell+n/(n-m)}.
\end{split}
\end{equation}

Suppose $n$ is odd as $n=2k+1$. If $z_{k+1}=0$, the inequality \eqref{eq:main} follows.
Assume $z_{k+1}>0$. Then $m\leq k$.
By \eqref{inequ:1} and Lemma \ref{lem:pq},
\begin{equation}\label{a2}
 a^2\leq %\frac{n^2}{n/\ell+n/(n-m)}\leq 
n^2\frac{m}{n}\left(1-\frac{m}{n}\right) 
\leq n^2\frac{k}{n}\left(1-\frac{k}{n}\right)=k(k+1), 
\end{equation}
where the second inequality follows from the fact $m\leq k$.
Further we have
\begin{equation}\label{kisu.00}
 a=\sum_{i=m+1}^n z_i\geq \sum_{i=k+1}^n z_i \geq (k+1)z_{k+1}
\end{equation}
and hence
\begin{equation}\label{kisu.1} 
 z_{k+1} \leq \frac{a}{k+1} \leq \Bigl(\frac{k}{k+1} \Bigr)^{1/2}. 
\end{equation}
The equalities in \eqref{a2}, \eqref{kisu.00} and \eqref{kisu.1},
\begin{equation}
 a=\{k(k+1)\}^{1/2}, \ a=(k+1)z_{k+1}, \ \text{and} \ z_{k+1}= \Bigl(\frac{k}{k+1} \Bigr)^{1/2}
\end{equation} 
are attained by
\begin{equation}
 z_1=\dots=z_k =-\Bigl(\frac{k+1}{k} \Bigr)^{1/2} \ \text{ and } \ 
z_{k+1}=\dots=z_{2k+1} =\Bigl(\frac{k}{k+1}\Bigr)^{1/2},
\end{equation}
which corresponds to the case \eqref{eq.odd.2}.
Assume $z_{k+1}<0$. Then $k+1\leq \ell$. 
By \eqref{inequ:1} and Lemma \ref{lem:pq},
\begin{equation}\label{a2.0}
 a^2%&\leq \frac{n^2}{n/\ell+n/(n-m)}
\leq n^2\frac{\ell}{n}\left(1-\frac{\ell}{n}\right) 
\leq n^2\frac{k+1}{n}\left(1-\frac{k+1}{n}\right)=k(k+1), 
\end{equation}
where the second inequality follows from the fact $k+1\leq\ell$.
Further we have
\begin{equation}\label{kisu.000}
 a=-\sum_{i=1}^\ell z_i\geq -\sum_{i=1}^{k+1} z_i \geq -(k+1)z_{k+1}
\end{equation}
and hence
\begin{equation}\label{kisu.2} 
0< -z_{k+1} \leq \frac{a}{k+1} \leq \Bigl(\frac{k}{k+1} \Bigr)^{1/2}. 
\end{equation}
The equalities in \eqref{a2.0}, \eqref{kisu.000} and \eqref{kisu.2},
\begin{equation}
 a=\{k(k+1)\}^{1/2}, \ a=-(k+1)z_{k+1}, \ \text{and} \ z_{k+1}= -\Bigl(\frac{k}{k+1} \Bigr)^{1/2}
\end{equation} 
are attained by
\begin{equation}
 z_1=\dots=z_{k+1} =-\Bigl(\frac{k}{k+1} \Bigr)^{1/2} \ \text{ and } \ 
z_{k+2}=\dots=z_{2k+1} =\Bigl(\frac{k+1}{k}\Bigr)^{1/2},
\end{equation}
which corresponds to the case \eqref{eq.odd.1}.
By \eqref{kisu.1} and \eqref{kisu.2}, we complete the proof for the case $n=2k+1$.

Suppose $n$ is odd as $n=2k$. If $z_{k}+z_{k+1}=0$, the inequality \eqref{eq:main} follows.
Then we consider the four cases
\begin{equation}
 \begin{cases}
  \text{Case I} & z_{k+1} > - z_{k} \geq 0, \\
  \text{Case II} & -z_k > z_{k+1}\geq 0,  \\
  \text{Case III} & 0< z_{k} \leq z_{k+1}, \\
  \text{Case IV} & z_{k}\leq z_{k+1}<0.
 \end{cases}
\end{equation}

For Case I, we have $m= k$ and, by Lemma \ref{lem:pq},
\begin{equation}
 a^2 \leq n^2\frac{m}{n}\left(1-\frac{m}{n}\right)=
n^2\frac{k}{n}\left(1-\frac{k}{n}\right)=\frac{n^2}{4}=k^2.
\end{equation}
Further we have
\begin{equation}
 a=\sum_{i=m+1}^n z_i= \sum_{i=k+1}^n z_i \geq k z_{k+1}
\end{equation}
and hence
\begin{equation}\label{gusu.1}
0<\frac{z_k+z_{k+1}}{2}\leq \frac{z_{k+1}}{2} \leq \frac{a}{2k} \leq \frac{1}{2}.
\end{equation}

For Case II, we have $\ell= k$ and
\begin{equation}
 a^2 \leq n^2\frac{\ell}{n}\left(1-\frac{\ell}{n}\right)=
n^2\frac{k}{n}\left(1-\frac{k}{n}\right)=\frac{n^2}{4}=k^2.
\end{equation}
Further we have
\begin{equation}
 a=-\sum_{i=1}^\ell z_i= -\sum_{i=1}^k z_i \geq -k z_{k}
\end{equation}
and hence
\begin{equation}\label{gusu.2}
0<-\frac{z_k+z_{k+1}}{2}\leq -\frac{z_k}{2} \leq \frac{a}{2k} \leq \frac{1}{2}.
\end{equation}

For Case III, we have $m+1 \leq k $ and
\begin{equation}\label{a2.gusu}
 a^2\leq n^2\frac{m}{n}\left(1-\frac{m}{n}\right)\leq n^2\frac{k-1}{n}\left(1-\frac{k-1}{n}\right)
=(k+1)(k-1).
\end{equation}
Further
\begin{equation}\label{gusu.00}
 a=\sum_{i=m+1}^n z_i\geq\sum_{i=k}^n z_i =\sum_{i=k+1}^n z_i +z_k\geq kz_{k+1} +z_k.
\end{equation}
Hence we have
\begin{equation}\label{gusu.3}
0< \frac{z_k+z_{k+1}}{2}\leq \frac{z_k+ k z_{k+1}}{k+1}\leq \frac{a}{k+1}\leq 
\Bigl(\frac{k-1}{k+1} \Bigr)^{1/2}. 
\end{equation}
The equalities in \eqref{a2.gusu}, \eqref{gusu.00} and \eqref{gusu.3},
\begin{equation}
 a=\{(k-1)(k+1)\}^{1/2}, \ a=kz_{k+1} +z_k, \ \text{and} \ 
\frac{z_k+z_{k+1}}{2}=\Bigl(\frac{k-1}{k+1}\Bigr)^{1/2}
\end{equation} 
are attained by
\begin{equation}
z_1=\dots=z_{k-1} =-\Bigl(\frac{k+1}{k-1} \Bigr)^{1/2} \ \text{ and } \ 
z_{k}=\dots=z_{2k} =\Bigl(\frac{k-1}{k+1}\Bigr)^{1/2},
\end{equation}
which corresponds to the case \eqref{eq.even.2}.

For Case IV, we have $\ell \geq k+1 $ and
\begin{equation}\label{a2.gusu.00}
 a^2\leq n^2\frac{\ell}{n}\left(1-\frac{\ell}{n}\right)\leq n^2\frac{k+1}{n}\left(1-\frac{k+1}{n}\right)=(k+1)(k-1).
\end{equation}
Further
\begin{equation}\label{gusu.000}
 a=- \sum_{i=1}^\ell z_i \geq - \sum_{i=1}^{k+1} z_i = -\sum_{i=1}^k z_i - z_{k+1} \geq -kz_k -z_{k+1}.
\end{equation}
Hence we have
\begin{equation}\label{gusu.4}
 -\frac{z_k+z_{k+1}}{2}\leq -\frac{ k z_k + z_{k+1}}{k+1}\leq \frac{a}{k+1}\leq 
\Bigl(\frac{k-1}{k+1} \Bigr)^{1/2}. 
\end{equation}
The equalities in \eqref{a2.gusu.00}, \eqref{gusu.000} and \eqref{gusu.4},
\begin{equation}
 a=\{(k-1)(k+1)\}^{1/2}, \ a=-(kz_k +z_{k+1}), \ \text{and} \ 
\frac{z_k+z_{k+1}}{2}=-\Bigl(\frac{k-1}{k+1} \Bigr)^{1/2}
\end{equation} 
are attained by
\begin{equation}
 z_1=\dots=z_{k+1} =-\Bigl(\frac{k-1}{k+1} \Bigr)^{1/2} \ \text{ and } \ 
z_{k+2}=\dots=z_{2k} =\Bigl(\frac{k+1}{k-1}\Bigr)^{1/2},
\end{equation}
which corresponds to the case \eqref{eq.even.1}.

Note $\{(k-1)/(k+1)\}^{1/2}>1/2$ for $k\geq 2$. Then, by \eqref{gusu.1}, \eqref{gusu.2}, 
\eqref{gusu.3}, and \eqref{gusu.4}, we have
\begin{equation}
 \Bigl|\frac{z_k+z_{k+1}}{2}\Bigr|
\leq  \Bigl(\frac{k-1}{k+1} \Bigr)^{1/2},
\end{equation}
which completes the proof for the case $n=2k$.

\end{proof}
\begin{lemma}\label{lem:pq}
 Suppose $ p>0$, $q>0$ and $p+q\leq 1$.
Then
\begin{equation}
 \frac{1}{1/p+1/q}\leq \min\{p(1-p),q(1-q), (p+q)/4\}.
\end{equation}
\end{lemma}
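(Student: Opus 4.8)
The plan is to rewrite the left-hand side in a more transparent form and then verify the three upper bounds in the minimum one at a time. Since $1/(1/p+1/q)=pq/(p+q)$, the claim is equivalent to showing that $pq/(p+q)$ is bounded above by each of $p(1-p)$, $q(1-q)$, and $(p+q)/4$. Because $pq/(p+q)$ is symmetric in $p$ and $q$, the bounds involving $p(1-p)$ and $q(1-q)$ are interchanged by the swap $p\leftrightarrow q$, so it suffices to establish the $p(1-p)$ bound and the $(p+q)/4$ bound and then invoke symmetry.

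For the bound $pq/(p+q)\le p(1-p)$, I would divide through by $p>0$ and clear the positive denominator $p+q$, reducing the inequality to $q\le(1-p)(p+q)$. Expanding the right-hand side as $p+q-p(p+q)$, this is in turn equivalent to $p(p+q)\le p$, and dividing once more by $p>0$ gives exactly the hypothesis $p+q\le 1$. Hence this bound holds, and by the symmetry noted above so does $pq/(p+q)\le q(1-q)$.

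For the bound $pq/(p+q)\le(p+q)/4$, clearing the positive denominator reduces it to $4pq\le(p+q)^2$, which rearranges to $0\le(p-q)^2$ and therefore holds unconditionally. Combining the three, $pq/(p+q)$ is at most the minimum of $p(1-p)$, $q(1-q)$, and $(p+q)/4$, which is the assertion.

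There is no genuine obstacle in this lemma; the only point worth flagging is the asymmetry in how the hypotheses are used. The first two bounds rely on the full strength of $p+q\le 1$ (not merely $p,q>0$), while the third is just the AM--GM inequality and requires only positivity. This is precisely why the three quantities appear together as a minimum: the term $(p+q)/4$ provides the universally valid ceiling, and the terms $p(1-p)$, $q(1-q)$ sharpen it exactly when $p+q\le 1$ forces them below $(p+q)/4$.
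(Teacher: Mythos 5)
Your proof is correct and is essentially the same elementary argument as the paper's: both reduce the claim to $pq/(p+q)$ being bounded by each of the three quantities and verify this by direct algebra, the paper by exhibiting each difference as an explicitly nonnegative expression (e.g.\ $p(1-p)-pq/(p+q)=p^2(1-p-q)/(p+q)$) and you by an equivalent chain of reductions to $p+q\le 1$ and $(p-q)^2\ge 0$. Your use of the $p\leftrightarrow q$ symmetry and your remark on which bounds need $p+q\le 1$ are fine but do not change the substance.
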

\begin{proof}
 We have
\begin{equation}
 \begin{split}
  p(1-p)-\frac{pq}{p+q}&=\frac{p^2}{p+q}(1-p-q)\geq 0 \\
  q(1-q)-\frac{pq}{p+q}&=\frac{q^2}{p+q}(1-p-q)\geq 0 \\
\frac{p+q}{4}-\frac{pq}{p+q}&=\frac{(p-q)^2}{4(p+q)}\geq 0,
 \end{split}
\end{equation}
which completes the proof.
\end{proof}


\begin{thebibliography}{5}
% BibTex style file: imsart-nameyear.bst, 2017-11-03
% Default style options (sort=1,type=nameyear).
% Used options (sort=1,type=nameyear).

\bibitem[\protect\citeauthoryear{Boyd}{1971}]{Boyd-1971}
\begin{barticle}[author]
\bauthor{\bsnm{Boyd},~\bfnm{A.~V.}\binits{A.~V.}}
(\byear{1971}).
\btitle{Bounds for order statistics}.
\bjournal{Univ. Beograd. Publ. Elektrotehn. Fak. Ser. Mat. Fiz.}
\bvolume{357-380}
\bpages{31--32}.
\bmrnumber{310154}
\end{barticle}
\endbibitem

\bibitem[\protect\citeauthoryear{Garver}{1932}]{Garver-1932}
\begin{barticle}[author]
\bauthor{\bsnm{Garver},~\bfnm{Raymond}\binits{R.}}
(\byear{1932}).
\btitle{Concerning the Limits of a Measure of Skewness}.
\bjournal{Ann. Math. Statist.}
\bvolume{3}
\bpages{358--360}.
\end{barticle}
\endbibitem

\bibitem[\protect\citeauthoryear{Hawkins}{1971}]{Hawkins-1971}
\begin{barticle}[author]
\bauthor{\bsnm{Hawkins},~\bfnm{Douglas~M.}\binits{D.~M.}}
(\byear{1971}).
\btitle{On the bounds of the range of order statistics}.
\bjournal{J. Amer. Statist. Assoc.}
\bvolume{66}
\bpages{644--645}.
\end{barticle}
\endbibitem

\bibitem[\protect\citeauthoryear{Hotelling and
  Solomons}{1932}]{Hotelling-Solomons-1932}
\begin{barticle}[author]
\bauthor{\bsnm{Hotelling},~\bfnm{Harold}\binits{H.}} \AND
  \bauthor{\bsnm{Solomons},~\bfnm{Leonard~M}\binits{L.~M.}}
(\byear{1932}).
\btitle{The limits of a measure of skewness}.
\bjournal{Ann. Math. Statist.}
\bvolume{3}
\bpages{141--142}.
\end{barticle}
\endbibitem

\bibitem[\protect\citeauthoryear{Majindar}{1962}]{Majindar-1962}
\begin{barticle}[author]
\bauthor{\bsnm{Majindar},~\bfnm{Kulendra~N.}\binits{K.~N.}}
(\byear{1962}).
\btitle{Improved bounds on a measure of skewness}.
\bjournal{Ann. Math. Statist.}
\bvolume{33}
\bpages{1192--1194}.
\bmrnumber{154366}
\end{barticle}
\endbibitem

\end{thebibliography}
\end{document}